\newtheorem{thm}{Theorem}
\newtheorem{prop}[thm]{Proposition}
\theoremstyle{remark}
\newcommand{\FF}{\mathbb{F}}
\DeclareMathOperator{\wt}{wt}
\DeclareMathOperator{\supp}{supp}
\begin{document}
\title{Singly even self-dual codes of length $24k+10$ and
minimum weight $4k+2$}

\author{
Masaaki Harada\thanks{
Research Center for Pure and Applied Mathematics,
Graduate School of Information Sciences,
Tohoku University, Sendai 980--8579, Japan.
email: mharada@m.tohoku.ac.jp.}
}

\maketitle

\begin{abstract}
Currently, the existence of an extremal singly even self-dual 
code of length $24k+10$ is unknown for all nonnegative integers $k$.
In this note, 
we study singly even self-dual $[24k+10,12k+5,4k+2]$ codes.
We give some restrictions on the possible
weight enumerators of singly even self-dual $[24k+10,12k+5,4k+2]$
codes with shadows of minimum weight at least $5$
for $k=2,3,4,5$.
We discuss a method for constructing singly even self-dual
codes with minimal shadow.
As an example, a singly even self-dual $[82,41,14]$ code
with minimal shadow is constructed for the first time.
In addition, as neighbors of the code, 
we construct singly even self-dual $[82,41,14]$ codes 
with weight enumerator for which no singly even
self-dual code was previously known to exist.
\end{abstract}

%

\section{Introduction}

Extremal self-dual codes are an important class of linear codes 
for both theoretical and practical reasons.
It is a fundamental problem to determine the largest
minimum weight among self-dual codes of that length,
and much work has been done concerning this problem.

A (binary) code $C$ of length $n$ is a vector subspace of
$\FF_2^n$, where $\FF_2$ denotes the finite field of order $2$.
All codes in this note are binary.
The {\em dual} code $C^{\perp}$ of $C$ is defined as
$
C^{\perp}=
\{x \in \FF_2^n \mid x \cdot y = 0 \text{ for all } y \in C\},
$
where $x \cdot y$ is the standard inner product.
A code $C$ is called
{\em self-dual} if $C = C^{\perp}$.
Self-dual codes are divided into two classes.
A self-dual code $C$ is {\em doubly even} if all
codewords $x$ of $C$ have weight $\wt(x) \equiv 0 \pmod 4$, and {\em
singly even} if there is at least one codeword of weight $\equiv 2
\pmod 4$.
A doubly even self-dual code of length $n$ exists if and
only if $n \equiv 0 \pmod 8$, while a singly even self-dual code
of length $n$ exists if and
only if $n$ is even.

Let $C$ be a singly even self-dual code.
Let $C_0$ denote the 
subcode of $C$ consisting of 
codewords $x$ having weight $\wt(x) \equiv0\pmod4$.
The {\em shadow} $S$ of $C$ is defined to be $C_0^\perp \setminus C$.
A singly even self-dual code of length $n$
is called a code with {\em minimal shadow} 
if the minimum weight of the shadow is $4,1,2$ and $3$ 
if $n \equiv 0,2,4$ and $6 \pmod 8$,
respectively.
The concept of singly even self-dual codes with minimal shadow 
was introduced in~\cite{BMW}.

Rains~\cite{Rains} showed that
the minimum weight $d$ of a self-dual code of length
$n$ is bounded by $d\le 4 \lfloor n/24 \rfloor +4$ unless 
$n \equiv 22 \pmod{24}$ when $d \le 4 \lfloor n/24
\rfloor+6$.
A self-dual code meeting the upper bound is called {\em extremal}.
We say that a self-dual code is {\em optimal} if it has
the largest minimum weight among all self-dual codes of
that length.
For length $24k+10$ $(k=0,1,\ldots,5)$,
we give the current information on the largest minimum
weight $d(24k+10)$:
\begin{multline*}
d(10)=2,
d(34)=6,
d(58)=10,
d(82)=14 \text{ or }16,
\\
d(106)=16 \text{ or }18,
d(130)=20, 22 \text{ or }24,
\end{multline*}
(see~\cite[Table~I]{C-S}, \cite[Table~VI]{DGH}, \cite[Table~I]{HKWY}).
Currently, the existence of an extremal singly even self-dual 
code of length $24k+10$ is unknown for all nonnegative integers $k$.
In addition, Han and Lee~\cite{HL} conjecture that
there is no extremal singly even self-dual code of length $24k+10$
for all nonnegative integers $k$.
It was shown in~\cite{BW} that there is no extremal singly even
self-dual code with minimal shadow
for length $24k+10$.
These motivate our interest in singly even self-dual
$[24k+10,12k+5,4k+2]$ codes.

This note is organized as follows.
In Section~\ref{sec:WE},
the possible weight enumerators of singly even self-dual
$[82,41,14]$ codes are determined.
In addition, in Section~\ref{sec:res},
we give some restrictions on the possible
weight enumerators of singly even self-dual $[24k+10,12k+5,4k+2]$
codes with shadows of minimum weight at least $5$
for $k=2,3,4,5$.
In Section~\ref{sec:const},
we discuss a method for constructing singly even self-dual
codes with minimal shadow.
As an example, a singly even self-dual $[82,41,14]$ code $C_{82}$
with minimal shadow is constructed for the first time.
Finally, 
in Section~\ref{sec:82n}, as neighbors of $C_{82}$, 
we construct singly even self-dual $[82,41,14]$ codes 
with weight enumerator for which no singly even
self-dual code was previously known to exist.
It is a fundamental problem to find which weight enumerators 
actually occur for the possible weight enumerators.
We emphasize that singly even self-dual $[82,41,14]$ codes with shadows
of minimum weight $1,5,9$ are constructed for the first time.

All computer calculations in this note
were done with the help of 
the algebra software {\sc Magma}~\cite{Magma} and
the mathematical software {\sc Mathematica}.

\section{Weight enumerators of singly even self-dual $[82,41,14]$ codes}
\label{sec:WE}

Let $C$ be a singly even self-dual code of
length $n$ with shadow $S$.
Let $A_i$ and $B_i$ be the numbers of vectors of
weight $i$ in $C$ and $S$, respectively.
The weight enumerators $W_C$ and $W_S$ of $C$ and $S$ are given by
$\sum_{i=0}^n A_i y^i$
and
$\sum_{i=d(S)}^{n-d(S)} B_i y^i$, respectively,
where $d(S)$ denotes the minimum weight of $S$.
If we write
\[
W_C = \sum_{j=0}^{ \lfloor n/8  \rfloor}
a_j(1+y^2)^{n/2-4j}(y^2(1-y^2)^2)^j,
\]
for suitable integers $a_j$, then
\[
W_S  = \sum_{j=0}^{ \lfloor n/8
\rfloor}(-1)^j a_j2^{n/2-6j}y^{n/2-4j}(1-y^4)^{2j},
\]
\cite[(10), (11)]{C-S}.
Suppose that $C$ is a singly even self-dual $[82,41,14]$ code.
Since the minimum weight is $14$, we have
\begin{multline*}
a_0=1,
a_1=-41,
a_2=615,
a_3=-4182,
\\
a_4=13161,
a_5=-18040,
a_6=9512.
\end{multline*}
Then the weight enumerator of the shadow $S$ is written as:
\begin{align*}
&
\frac{a_{10}}{524288} y
+\left(- \frac{a_9}{8192} - \frac{5 a_{10}}{131072} \right) y^5
+\left(\frac{a_8}{128} + \frac{9a_9}{4096} +\frac{95a_{10}}{262144} 
\right) y^9
\\&
+\left(- \frac{a_7}{2} -  \frac{a_8}{8} - 
\frac{153a_9}{8192} -\frac{285a_{10}}{131072} \right) y^{13}
+ \cdots.
\end{align*}

\begin{itemize}
\item $d(S)=1$:
From~\cite[(6)]{C-S}, 
$S$ has a unique vector of weight $1$ and $S$
has no vector of weights $5$ and $9$.
Hence, $a_{10}=524288$, $a_9=-163840$ and $a_8=21760$.
Since $A_{14}=B_{13}$ by~\cite{H60}, 
\[
3280 + a_7 = - 800 - \frac{a_7}{2}. 
\]
Thus, we have that $a_7=-2720$.
Therefore, we have the following possible weight enumerators
\begin{align*}
W^C_{82,1}=&
1 + 560 y^{14} + 60724 y^{16} + 233545 y^{18}
+ \cdots,
\\
W^S_{82,1}=&
y + 560 y^{13} + 294269 y^{17} + 33367568 y^{21} + \cdots,
\end{align*} 
respectively.

\item $d(S)=5$:
From~\cite[(6)]{C-S}, we have $a_{10}=0$ and $a_9=-8192$.
Then we have that
$a_8$ is divisible by $128$, say $a_8=128\alpha$ and
$a_7$ is divisible by $2$, say $a_7=2\beta$, 
where $\alpha$ and $\beta$ are integers.
Therefore, we have the following possible weight enumerators
\begin{align*}
W^C_{82,2}=&
1
+( 3280 + 2 \beta )y^{14}
+( 36244  + 128 \alpha - 2 \beta )y^{16}
\\&
+( 506153  - 896 \alpha - 26 \beta )y^{18}
+ \cdots,
\\
W^S_{82,2}=&
y^5
+(- 18 + \alpha) y^9
+( 153 - 16 \alpha - \beta) y^{13}
\\&
+( 303568  + 120 \alpha  + 14 \beta) y^{17}
+ \cdots,
\end{align*} 
respectively.
Note that $\alpha \ge 18$ and $\beta \le 153-16\alpha \le -135$.
In Section~\ref{sec:res},
it is shown that $\beta$ is an even integer 
(see Proposition~\ref{prop:W82}).

\item $d(S) \ge 9$:
Then we have $a_{10}=a_9=0$.
We have that
$a_8$ is divisible by $128$, say $a_8=128\alpha$ and
$a_7$ is divisible by $2$, say $a_7=2\beta$, 
where $\alpha$ and $\beta$ are integers.
Therefore, we have the following possible weight enumerators
\begin{align*}
W^C_{82,3}=&
1
+( 3280  + 2 \beta )y^{14}
+( 36244  + 128 \alpha  - 2 \beta )y^{16}
\\&
+( 514345  - 896 \alpha - 26 \beta )y^{18}
+ \cdots,
\\
W^S_{82,3}=&
\alpha y^9
+( - 16 \alpha - \beta )y^{13}
+( 304384 + 120 \alpha + 14 \beta )y^{17}
\\&
+( 33293312 - 560 \alpha - 91 \beta )y^{21}
+ \cdots,
\end{align*} 
respectively.  Note that
$\alpha \ge 0$ and $\beta \le -16\alpha \le 0$.
In Section~\ref{sec:res},
it is shown that $\beta$ is an even integer 
(see Proposition~\ref{prop:W82}).
\end{itemize}

It is unknown whether there is a singly even 
self-dual $[82,41,16]$ code.
The first example of a singly even 
self-dual $[82,41,14]$ code was found
in~\cite[Section V]{DGH}.
The weight enumerators of the code and its shadow were given,
however unfortunately the weight enumerator of the shadow
was incorrectly stated and the correct weight enumerator is
\[
656 y^{13} + 295200 y^{17} + 33353008 y^{21} + \cdots.
\]
This code has weight enumerator $W^C_{82,3}$ with 
\begin{equation}\label{eq:82}
(\alpha,\beta)=(0,-656). 
\end{equation}
The code was the only  previously known singly
even self-dual $[82, 41, 14]$ code.
In Sections~\ref{sec:const} and~\ref{sec:82n}, 
we construct singly even self-dual $[82,41,14]$ codes 
with weight enumerator for which no singly even
self-dual code was previously known to exist.

\section{Restrictions on weight enumerators of singly even
self-dual $[24k+10,12k+5,4k+2]$ codes}
\label{sec:res}


It was shown in~\cite{BHM} that
the weight enumerator of a singly even self-dual 
$[24k+10,12k+5,4k+2]$ code with minimal shadow is uniquely 
determined.
In this section, we give some restrictions on the possible
weight enumerators of singly even self-dual $[24k+10,12k+5,4k+2]$
codes with shadows of minimum weight at least $5$
for $k=2,3,4,5$.
It is a key idea to consider the possible weight enumerator
of $C_1$.

\subsection{Possible weight enumerators of $C_1$}

Let $C$ be a singly even self-dual $[24k+10,12k+5,4k+2]$ code.
Let $W^{(1)}$ and $W^{(3)}$ denote the weight enumerators
of $C_1$ and $C_3$, respectively.
By~\cite[Theorem~5, 5)]{C-S}, the possible weight enumerators
$W^{(1)}-W^{(3)}$ are written as:
\begin{equation}\label{eq:W1W3}
W^{(1)}-W^{(3)}
=
\sum_{i=0}^{k-1} b_i
(1 + 14 y^4 + y^8)^{3k-1-3i}
(y^4 (1 - y^4)^4)^i
f(y)
\end{equation}
where 
$f(y)=y - 34 y^5 + 34 y^{13} - y^{17}$
and $b_0,b_1,\ldots,b_{k-1}$ are integers.
Combined with the possible weight enumerators of the shadow,
using~\eqref{eq:W1W3}, the possible weight enumerators of $C_1$
are determined.  

\subsection{Optimal singly even self-dual $[58,29,10]$ codes 
with shadows of minimum weight at least $5$}

The possible weight enumerators of optimal singly even
self-dual $[58,29,10]$ codes with shadow of minimum
weight at least $5$ and the shadows
are known as follows:
\begin{align*}
W^C_{58} =& 1 +(319 - 24 \beta - 2 \gamma)y^{10} +
(3132 + 152 \beta + 2\gamma)y^{12} + \cdots, 
\\
W^S_{58} =&
\beta y^5
+ \gamma y^9
+ (24128 - 54 \beta - 10 \gamma )y^{13}
\\&
+ (1469952 + 320 \beta + 45 \gamma )y^{17}
+ \cdots,
\end{align*}
respectively, where $\beta, \gamma$ are integers~\cite{C-S}.
If there is an optimal singly even self-dual
$[58,29,10]$ code with weight enumerator $W^C_{58}$,
then $\beta \in \{0,1,2\}$~\cite{HM}.
An optimal singly even self-dual code with weight enumerator
$W^C_{58}$ is known for
\begin{align*}
\beta=0 \text{ and }
& \gamma\in \{2m \mid m =0,1,\ldots,65,68,71,79\}, \\
\beta=1  \text{ and }
& \gamma\in \{2m \mid m =8,9,\ldots,58,63\}, \\
\beta=2  \text{ and }
& \gamma\in \{2m \mid m =0,4,6,\ldots,55\}
\end{align*}
(see~\cite{H60}).

\begin{thm}\label{thm:W58}
If there is an optimal singly even self-dual $[58,29,10]$
code with weight enumerator $W^C_{58}$, then
$\gamma$ must be an even integer.
\end{thm}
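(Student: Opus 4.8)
The goal is to show that the parameter $\gamma$ in the shadow weight enumerator $W^S_{58}$ must be even. Since $\gamma$ is the coefficient $B_9$ (the number of shadow vectors of weight $9$), this is fundamentally a parity statement about a weight-level count. My plan is to exploit the decomposition of $C$ into the cosets $C_0, C_1, C_2, C_3$ of $C_0$ in $C_0^\perp$, and to use the possible weight enumerator of $C_1$ provided by equation~\eqref{eq:W1W3}. The key observation is that the shadow $S = C_0^\perp \setminus C$ splits as $S = C_1 \cup C_3$, so that $B_i = W^{(1)}_i + W^{(3)}_i$, where $W^{(1)}_i$ and $W^{(3)}_i$ denote the coefficients of $y^i$ in $W^{(1)}$ and $W^{(3)}$. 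In particular $\gamma = B_9 = W^{(1)}_9 + W^{(3)}_9$.

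**The main steps.**

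First I would specialize the formula~\eqref{eq:W1W3} to $k=2$, so that $C_1$ and $C_3$ live in a $[58,29,10]$ code. With $k=2$ the sum in~\eqref{eq:W1W3} runs over $i=0,1$, giving
\[
W^{(1)} - W^{(3)}
= b_0 (1 + 14 y^4 + y^8)^{5} f(y)
+ b_1 (1 + 14 y^4 + y^8)^{2} \, y^4 (1 - y^4)^4 f(y),
\]
with $f(y) = y - 34 y^5 + 34 y^{13} - y^{17}$. Extracting the coefficient of $y^9$ from the right-hand side expresses $W^{(1)}_9 - W^{(3)}_9$ as an explicit integer linear combination of $b_0$ and $b_1$. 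The second step is to match the low-order coefficients of $W^{(1)} - W^{(3)}$ against the known shadow data in $W^S_{58}$ (using $B_5 = \beta$, $B_9 = \gamma$, etc.) together with the normalization forced by the minimum weight $d = 10$; this pins down $b_0, b_1$ in terms of $\beta$ and $\gamma$, or at least pins down the relevant congruences. The third and decisive step is to combine the expression for $W^{(1)}_9 - W^{(3)}_9$ with the already-available expression for $W^{(1)}_9 + W^{(3)}_9 = \gamma$: adding these shows $2\,W^{(1)}_9$ equals $\gamma$ plus an integer combination of the $b_i$, and since the left side is manifestly even, reducing modulo $2$ yields a congruence on $\gamma$.

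**Where the difficulty lies.**

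The crux is the parity bookkeeping in the final step. Once I have $W^{(1)}_9 - W^{(3)}_9$ as a specific integer combination $c_0 b_0 + c_1 b_1$ of the free parameters, I must argue that $\gamma \equiv W^{(1)}_9 - W^{(3)}_9 \pmod 2$, because $\gamma = W^{(1)}_9 + W^{(3)}_9$ and sum and difference of two integers share the same parity. So the whole statement reduces to showing $c_0 b_0 + c_1 b_1$ is even, which in turn requires knowing the parities of the coefficients $c_0, c_1$ extracted from~\eqref{eq:W1W3} (these come from expanding the binomial powers of $1 + 14y^4 + y^8$ and the factor $f$, all of whose coefficients are integers, so the $c_i$ are explicit and their parities are computable) together with any forced parity of the $b_i$ coming from the matching in step two. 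The main obstacle I anticipate is not any deep idea but rather ensuring the coefficient extraction in step one is done correctly and that the relations tying $b_0, b_1$ to $\beta, \gamma$ are complete enough to force the parity; if it turns out that the $c_i$ are both even, the conclusion is immediate, and if not, I expect the constraints from matching the minimum-weight normalization (which fixes several leading coefficients to $0$) to supply the missing parity of the $b_i$. I would carry out the explicit expansion of $(1 + 14 y^4 + y^8)^{5} f(y)$ through degree $9$ first, since that single computation is likely to settle the parity of the dominant contribution $c_0 b_0$.
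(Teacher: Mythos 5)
Your proposal is correct and takes essentially the same route as the paper: specialize \eqref{eq:W1W3} to $k=2$, use $S = C_1 \cup C_3$ so that $\gamma = W^{(1)}_9 + W^{(3)}_9$ has the same parity as $W^{(1)}_9 - W^{(3)}_9 = -415\,b_0 - 10\,b_1$, and note that the absence of weight-$1$ shadow vectors forces $b_0 = 0$ (exactly the resolution you anticipated for the odd coefficient $c_0 = -415$), leaving the even quantity $-10\,b_1$. The paper phrases the final step as integrality of the coefficient $-5b_1 + \gamma/2$ of $y^9$ in $W^{(1)} = \tfrac{1}{2}\bigl(W^S_{58} + (W^{(1)}-W^{(3)})\bigr)$, which is the same parity argument.
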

\begin{proof}
Let $C$ be an optimal singly even self-dual $[58,29,10]$
code with weight enumerator $W^C_{58}$.
From \eqref{eq:W1W3}, we obtain
\begin{align*}
W^{(1)}-W^{(3)}=&
b_0 y
+( 36 b_0 + b_1 )y^5
+(- 415 b_0 - 10 b_1 )y^9
\\&
+(- 39056 b_0 -  724 b_1 )y^{13}
+(- 742131 b_0  - 3694 b_1 )y^{17}
+ \cdots.
\end{align*}
Since the shadow contains no vector of weight $1$,
we have that $b_0=0$.
Hence, we have
\begin{align*}
W^{(1)}=&
\frac{1}{2}(b_1+ \beta) y^5
+\left(- 5 b_1  + \frac{\gamma}{2}\right) y^9
+( 12064  - 362 b_1  - 27 \beta  - 5 \gamma )y^{13}
\\&
+\left( 734976  - 1847 b_1  + 160 \beta  + \frac{45 \gamma}{2}\right)y^{17}
+ \cdots.
\end{align*}
The result follows.
\end{proof}

\subsection{Singly even self-dual $[82,41,14]$ codes
with shadows of minimum weight at least $5$}

\begin{prop}\label{prop:W82}
If there is a singly even self-dual $[82,41,14]$
code with weight enumerator $W^C_{82,i}$, then
$\beta$ must be an even integer for $i=2,3$.
\end{prop}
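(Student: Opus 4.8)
The plan is to mirror the proof of Theorem~\ref{thm:W58}, exploiting that the shadow weight enumerator splits as $W^S_{82,i}=W^{(1)}+W^{(3)}$ while \eqref{eq:W1W3} supplies the complementary combination $W^{(1)}-W^{(3)}$. Adding these gives
\[
W^{(1)}=\tfrac12\bigl(W^S_{82,i}+(W^{(1)}-W^{(3)})\bigr),
\]
and the whole argument rests on the fact that every coefficient of $W^{(1)}$ is an integer, since it counts the vectors of a fixed weight in the coset $C_1$. So the strategy is to compute a few low-degree coefficients of $W^{(1)}$ explicitly and read off a parity obstruction that forces $\beta$ to be even.

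First I would specialize \eqref{eq:W1W3} to $k=3$, where the sum runs over $i=0,1,2$, and expand each product $(1+14y^4+y^8)^{8-3i}(y^4(1-y^4)^4)^i f(y)$ through degree $13$. The target of this step is
\begin{multline*}
W^{(1)}-W^{(3)}=b_0 y+(78b_0+b_1)y^5+(1688b_0+32b_1+b_2)y^9\\
+(-32382b_0-553b_1-14b_2)y^{13}+\cdots .
\end{multline*}
I expect the bulk of the effort, and the only real place to slip, to be this expansion: reaching the $y^{13}$ term requires the $y^{12}$ coefficients of $(1+14y^4+y^8)^{8-3i}$ together with the leading terms of $(y^4(1-y^4)^4)^i$, so careful bookkeeping is the main obstacle.

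Next I would use the vanishing of small-weight shadow vectors to eliminate the leading $b_j$. In both cases $S$ has no vector of weight $1$, so $W^{(1)}_1=W^{(3)}_1=0$, and comparing with the formula above forces $b_0=0$; for $W^C_{82,3}$ the shadow additionally has no vector of weight $5$, which in the same way gives $b_1=0$.

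Finally I would extract the parity of $\beta$ from the integrality of $W^{(1)}$, treating the two enumerators separately. For $W^C_{82,2}$, with $b_0=0$ the coefficient of $y^5$ in $W^{(1)}$ is $\tfrac12(1+b_1)$, so $b_1$ is odd; the coefficient of $y^{13}$ is $\tfrac12\bigl(153-16\alpha-\beta-553b_1-14b_2\bigr)$, whose numerator is $\equiv 1+\beta+b_1\pmod 2$, and since $b_1$ is odd this forces $\beta$ even. For $W^C_{82,3}$, with $b_0=b_1=0$ the coefficient of $y^{13}$ in $W^{(1)}$ is $\tfrac12\bigl(-16\alpha-\beta-14b_2\bigr)$, whose numerator is $\equiv\beta\pmod 2$, again forcing $\beta$ even. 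Beyond the degree-$13$ expansion I anticipate no conceptual difficulty, and the proposition follows in both cases.
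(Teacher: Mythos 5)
Your proposal is correct and is essentially the paper's own proof: the same specialization of \eqref{eq:W1W3} to $k=3$ (with identical coefficients through $y^{13}$), the same deduction $b_0=0$ from the absence of weight-$1$ shadow vectors, and the same parity extraction from the integrality of the $y^5$ and $y^{13}$ coefficients of $W^{(1)}=\tfrac12\bigl(W_S+(W^{(1)}-W^{(3)})\bigr)$. The only cosmetic difference is that the paper handles both cases at once via a unified parametrization $a\in\{0,1\}$ (using only that $a+b_1$ is even), while you split into $i=2$ ($b_1$ odd) and $i=3$ ($b_1=0$); the arithmetic is the same.
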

\begin{proof}
Let $C$ be a singly even self-dual $[82,41,14]$
code with shadow $S$ of minimum weight at least $5$.
From $W^C_{82,i}$ and $W^S_{82,i}$ ($i=2,3$),
the possible weight enumerators of $C$ and $S$
are written using integers $a,b,c$
\begin{align*}
W^C_{82}=&
1
+( 3280 + 2 c )y^{14}
+( 36244 + 128 b - 2 c )y^{16}
\\&
+( 514345 - 8192 a - 896 b - 26 c )y^{18}
+ \cdots,
\\
W^S_{82}=&
a y^5
+(- 18 a  + b) y^9
+( 153 a - 16 b - c) y^{13}
\\&
+( 304384 - 816 a + 120 b + 14 c) y^{17}
+ \cdots,
\end{align*}
respectively.
From \eqref{eq:W1W3}, we obtain
\begin{align*}
W^{(1)}-W^{(3)}=&
b_0 y
+( 78 b_0  + b_1 )y^5
+( 1688 b_0  + 32 b_1  + b_2 )y^9
\\&
+(- 32382 b_0  - 553 b_1  - 14 b_2 )y^{13}
\\&
+(- 2525349 b_0  - 37184 b_1  - 678 b_2 )y^{17}
+ \cdots.
\end{align*}
Since the shadow contains no vector of weight $1$,
we have that $b_0=0$.
Hence, we have
\begin{align*}
W^{(1)}=&
\frac{a + b_1}{2}y^5
+\left(- 9 a + 16 b_1 + \frac{b_2+ b}{2}\right)y^9
\\&
+\left(\frac{153a- 553 b_1-c}{2} - 7 b_2 - 8 b\right)y^{13}
\\&
+(152192 - 408 a - 18592 b_1 - 339 b_2 + 60 b + 7 c) y^{17}
+ \cdots.
\end{align*}
Since $a+b_1$ is even, 
$c$ must be an even integer.
The result follows.
\end{proof}

\subsection{Singly even self-dual $[106,53,18]$ codes
with shadows of minimum weight at least $5$}

By a method  similar to that given in Section~\ref{sec:WE},
the possible weight 
enumerators of singly even self-dual $[106,53,18]$ codes
with shadows of minimum weight at least $5$ 
and the shadows are determined as follows:
\begin{align*}
W^C_{106}=&
1
+(35245 + 2 d )y^{18}
+(416262 + 128 c - 2 d )y^{20}
\\&
+(6586310 + 8192 b - 896 c - 34 d )y^{22}
\\&
+(86626645 + 524288 a - 106496 b + 1024 c + 34 d )y^{24}
+ \cdots,
\\
W^S_{106}=&
a y^5
+(- 24 a - b )y^9
+( 276 a + 22 b + c )y^{13}
\\&
+(- 2024 a - 231 b - 20 c - d )y^{17}
+ \cdots,
\end{align*}
respectively, where $a,b,c,d$ are integers.
If $a=0$, then $-b \in \{0,1,2\}$ by~\cite[Lemma~2]{HM}.

\begin{prop}\label{prop:W106}
If there is a singly even self-dual $[106,53,18]$
code with weight enumerator $W^C_{106}$, then
$d$ must be an even integer.
\end{prop}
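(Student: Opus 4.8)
The plan is to follow the proofs of Theorem~\ref{thm:W58} and Proposition~\ref{prop:W82}: the parity of $d$ will be forced by the requirement that the weight enumerator $W^{(1)}$ of $C_1$ have integer coefficients (they count vectors, so they are nonnegative integers). Since $106=24\cdot 4+10$, I would apply~\eqref{eq:W1W3} with $k=4$, writing $W^{(1)}-W^{(3)}$ as an integral combination of $b_0,b_1,b_2,b_3$. Expanding the right-hand side, the coefficient of $y$ is exactly $b_0$, since every summand with $i\ge 1$ is divisible by $y^{4i+1}$. Because $W^S_{106}$ has no term of weight $1$, the shadow $S=C_1\cup C_3$ contains no vector of weight $1$, so $W^{(1)}-W^{(3)}$ has vanishing $y$-coefficient and hence $b_0=0$.

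With $b_0=0$, I would form
\[
W^{(1)}=\tfrac12\bigl(W^S_{106}+(W^{(1)}-W^{(3)})\bigr)
\]
from $W^S_{106}=W^{(1)}+W^{(3)}$, and examine the coefficients of $y^5,y^9,y^{13},y^{17}$: each must be an integer, so the matching coefficient of $W^S_{106}+(W^{(1)}-W^{(3)})$ must be even. The cleanest way to track this is to reduce modulo $2$. Since $1+14y^4+y^8\equiv 1+y^8$, $(1-y^4)^4\equiv 1+y^{16}$, and $f(y)\equiv y(1+y^{16})\pmod 2$, the $i$-th summand of~\eqref{eq:W1W3} becomes $y^{4i+1}(1+y^8)^{11-3i}(1+y^{16})^{i+1}\pmod 2$. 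A short expansion of the low-order terms then shows that, modulo $2$, the $y^9$- and $y^{17}$-coefficients of $W^{(1)}-W^{(3)}$ are both $\equiv b_2$, while the corresponding coefficients of $W^S_{106}$ are $\equiv b$ and $\equiv b+d$, respectively.

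Comparing parities finishes the argument: integrality of the $y^9$-coefficient of $W^{(1)}$ gives $b\equiv b_2\pmod 2$, and integrality of the $y^{17}$-coefficient gives $b+d\equiv b_2\pmod 2$; subtracting these yields $d\equiv 0\pmod 2$, as claimed. The only laborious step is the explicit expansion of~\eqref{eq:W1W3} for $k=4$, where the bracket is now raised to powers up to $11$, there are four unknowns, and one must carry the series out to weight $17$ rather than to weight $13$ as in the $[58]$ and $[82]$ cases; this is routine with {\sc Mathematica} or {\sc Magma}. I expect the main obstacle to be bookkeeping rather than anything conceptual, and the modular reduction above is precisely what keeps the final parity comparison short.
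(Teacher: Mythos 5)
Your proposal is correct and follows essentially the same route as the paper: the paper also expands~\eqref{eq:W1W3} with $k=4$, deduces $b_0=0$ from the absence of weight-one shadow vectors, and then forces parity from integrality of the $y^9$- and $y^{17}$-coefficients of $W^{(1)}=\tfrac12\bigl(W^S_{106}+(W^{(1)}-W^{(3)})\bigr)$, obtaining $b_2\equiv b\pmod 2$ and $675b_2+231b+d\equiv 0\pmod 2$, hence $d$ even. Your mod-$2$ reduction of the summands is just a tidier way of doing the paper's explicit integer expansion, not a different argument.
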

\begin{proof}
Let $C$ be a singly even self-dual $[106,53,18]$
code with weight enumerator $W^C_{106}$.
From \eqref{eq:W1W3}, we obtain
\begin{align*}
W^{(1)}-W^{(3)}=&
b_0 y
+( 120 b_0 + b_1 )y^5
+( 5555 b_0 + 74 b_1 + b_2 )y^9
\\&
+( 87440 b_0 + 1382 b_1 + 28 b_2 + b_3 )y^{13}
\\&
+(- 2666610 b_0 - 38670 b_1 - 675 b_2 - 18 b_3 )y^{17}
+ \cdots.
\end{align*}
Since the shadow has minimum weight at least $5$,
we have $b_0=0$.
Hence, we have
\begin{align*}
W^{(1)}=&
\frac{a +b_1}{2} y^5
+\left(- 12 a  + 37 b_1 + \frac{b_2 - b}{2}\right)y^9
\\&
+\left(138 a + 691 b_1 + 14 b_2+ 11 b + \frac{b_3 +c}{2}\right)y^{13}
\\&
+\left(- 1012 a - 19335 b_1 - 9 b_3 - 10 c - \frac{675 b_2 + 231 b 
+ d}{2}\right)y^{17}
+ \cdots.
\end{align*}
Since $b_2-b$ is even, $d$ must be even.
The result follows.
\end{proof}

It is unknown whether there is a singly even self-dual
$[106,53,18]$ code or not (see~\cite[Table~I]{HKWY}).

\subsection{Singly even self-dual $[130,65,22]$ codes
with shadows of minimum weight at least $5$}

By a method similar to that given in Section~\ref{sec:WE},
the possible weight 
enumerators of singly even self-dual $[130,65,22]$ codes
with shadows of minimum weight at least $5$ 
and the shadows are determined as follows:
\begin{align*}
W^C_{130}=&
1
+(388700 + 2 e)y^{22}
+(4791150 + 128 d - 2 e)y^{24}
\\&
+(81082890 + 8192 c - 896 d - 42 e)y^{26}
\\&
+(1200197180 + 524288 b - 106496 c + 512 d + 42 e)y^{28}
\\&
+(14196225992 - 33554432 a - 9961472 b + 532480 c 
\\& 
\qquad + 10752 d +  420 e) y^{30}
+ \cdots,
\\
W^S_{130}=&
a y^5
+(- 30 a + b )y^9
+(435 a - 28 b  - c )y^{13}
\\&
+(- 4060 a + 378 b + 26 c + d )y^{17}
\\&
+(27405 a - 3276 b - 325 c - 24 d - e )y^{21}
+ \cdots,
\end{align*}
respectively, where $a,b,c,d,e$ are integers.

\begin{prop}\label{prop:W130}
If there is a singly even self-dual $[130,65,22]$
code with weight enumerator $W^C_{130}$, then
$e$ must be an even integer.
\end{prop}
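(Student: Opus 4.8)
The plan is to follow verbatim the same parity argument used in the proofs of Theorem~\ref{thm:W58} and Propositions~\ref{prop:W82} and~\ref{prop:W106}, since the $[130,65,22]$ case is structurally identical. The key observation across all these proofs is that the coefficient of $y^5$ in $W^{(1)}$, the weight enumerator of $C_1$, turns out to be $\frac{1}{2}(a+b_1)$ for integers $a,b_1$; since this must be a nonnegative integer, $a+b_1$ is forced to be even, and this single parity constraint then propagates through the remaining coefficients to pin down the parity of the top free parameter ($\gamma$, $c$, $d$, and now $e$ respectively). So the work is to exhibit the analogous expressions in this case and read off that $e$ is even.

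First I would expand the right-hand side of~\eqref{eq:W1W3} for $k=5$, so that $3k-1=14$ and the sum runs $i=0,\dots,4$ with free integer coefficients $b_0,b_1,b_2,b_3,b_4$, and collect the coefficients of $y,y^5,y^9,y^{13},y^{17},y^{21}$ in the resulting series $W^{(1)}-W^{(3)}$. The coefficient of $y$ is $b_0$, and since the shadow of a $[130,65,22]$ code with $d(S)\ge 5$ has no vector of weight $1$, we get $b_0=0$ immediately. Next I would use the standard relations $W^{(1)}=\frac{1}{2}\bigl((W^{(1)}-W^{(3)}) + W_S\bigr)$ together with the given form of $W^S_{130}$ (whose coefficients are the integer combinations of $a,b,c,d,e$ displayed above) to write out $W^{(1)}$ coefficient by coefficient. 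The coefficient of $y^5$ in $W^{(1)}$ will be $\frac{1}{2}(a+b_1)$, exactly as in the earlier propositions, forcing $a+b_1$ even.

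The final step is to locate $e$ in one of these $W^{(1)}$ coefficients and chase the parity. Since $e$ first appears in the $y^{21}$ term of $W^S_{130}$ (as $-e$ inside the coefficient $27405a-3276b-325c-24d-e$), I expect the $y^{21}$ coefficient of $W^{(1)}$ to contain a contribution of the form $\frac{1}{2}(\text{integer combination}+e)$ where, after using $b_0=0$, the even/odd status of the integer combination is already determined by the previously established parity $a+b_1\equiv 0\pmod 2$ together with the other $b_i$. Forcing this half-integer to be an integer then yields $e\equiv(\text{known parity})\pmod 2$, which simplifies to $e$ even. The result follows.

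The main obstacle I anticipate is bookkeeping rather than conceptual: one must track the parities of the several free parameters $b_1,\dots,b_4$ and the shadow parameters $a,\dots,d$ carefully enough to see that all contributions to the relevant coefficient except $e$ itself combine into an even integer, so that the single parity relation $a+b_1\equiv 0$ suffices to close the argument. The integrality of every coefficient of $W^{(1)}$ (being the number of codewords of each weight in the subcode $C_1$) is the structural fact doing the real work, and the only care needed is to make sure the coefficient chosen genuinely isolates $e$ with an odd coefficient so that its parity is determined.
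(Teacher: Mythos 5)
Your method is the paper's method: expand \eqref{eq:W1W3} with $k=5$, deduce $b_0=0$ from $d(S)\ge 5$, form $W^{(1)}=\tfrac{1}{2}\bigl((W^{(1)}-W^{(3)})+W_S\bigr)$, and exploit integrality of the coefficients of $W^{(1)}$. However, the specific parity chase you propose does not close. After $b_0=0$, the coefficient of $y^{21}$ in $W^{(1)}$ contains the half-integer terms $\frac{27405a}{2}-\frac{2983519b_1}{2}-\frac{781b_3}{2}-\frac{325c}{2}-\frac{e}{2}$ (the remaining terms, in $b_2,b_4,b,d$, have integer coefficients), so its integrality gives $a+b_1+b_3+c+e\equiv 0\pmod 2$. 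Your relation $a+b_1\equiv 0\pmod 2$ from the $y^5$ coefficient then yields only $b_3+c+e\equiv 0\pmod 2$, and since $b_3$ and $c$ are free parameters whose individual parities are unconstrained, this leaves the parity of $e$ undetermined. Your claim that ``the single parity relation $a+b_1\equiv 0$ suffices to close the argument'' is therefore false: in this case, unlike your reading of the smaller cases, the new parameters $b_3$ and $c$ enter the target coefficient with odd numerators.

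The missing step is the integrality of the $y^{13}$ coefficient, which contains the half-integer terms $\frac{435a}{2}+\frac{5081b_1}{2}+\frac{b_3}{2}-\frac{c}{2}$ and hence forces $a+b_1+b_3+c\equiv 0\pmod 2$; combined with the $y^{21}$ constraint this gives $e\equiv 0\pmod 2$ (the $y^5$ relation is then not even needed). This is exactly what the paper does: ``From the coefficients of $y^{13}$ and $y^{21}$, $e$ must be even.'' Note also that your claimed uniform pattern misdescribes the earlier proofs: in Proposition~\ref{prop:W106} the chase runs through the $y^9$ and $y^{17}$ coefficients (via $b_2-b$ even), not through $y^5$, and in Theorem~\ref{thm:W58} the $y^9$ coefficient alone settles $\gamma$. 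The repair is available inside your own framework---you correctly identify integrality of \emph{every} coefficient of $W^{(1)}$ as the engine---but as written your plan commits to the wrong pair of coefficients, and executed literally it would stall at $b_3+c+e\equiv 0\pmod 2$.
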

\begin{proof}
Let $C$ be a singly even self-dual $[130,65,22]$
code with weight enumerator $W^C_{130}$.
From \eqref{eq:W1W3}, we obtain
\begin{align*}
W^{(1)}-W^{(3)}=&
b_0y
+(162b_0 + b_1)y^5
+(11186b_0 + 116b_1 + b_2)y^9
\\&
+(394498b_0 + 5081b_1 + 70b_2 + b_3)y^{13}
\\&
+(4628826b_0 + 65936b_1 + 1092b_2 + 24b_3 + b_4)y^{17}
\\&
+(-226397710b_0 - 2983519b_1 - 43758b_2- 781b_3 - 22b_4)y^{21}
\\&
+ \cdots.
\end{align*}
Since the shadow has minimum weight at least $5$,
we have $b_0=0$.
Hence, we have
\begin{align*}
W^{(1)}=&
\frac{a + b_1}{2} y^5
+\left(- 15 a + 58 b_1 + \frac{b_2}{2} + \frac{b}{2}\right)y^9
\\&
+\left( \frac{435 a}{2} + \frac{5081 b_1}{2} + 35 b_2  + \frac{b_3}{2}- 14 b
 - \frac{c}{2}\right)y^{13}
\\&
+\left(- 2030 a + 32968 b_1 + 546 b_2 + 12 b_3 + \frac{b_4}{2} + 189 b + 13 c
 + \frac{d}{2}\right)y^{17}
\\&
+ \left(\frac{27405a}{2} - \frac{2983519 b_1}{2}- 21879 b_2 -
 \frac{781b_3}{2}
- 11 b_4 - 1638 b \right.
\\&
\qquad 
\left. - \frac{325c}{2} - 12 d - \frac{e}{2}\right)y^{21}
+ \cdots.
\end{align*}
From the coefficients of $y^{13}$ and $y^{21}$,  
$e$ must be even.
The result follows.
\end{proof}

It is unknown whether there is a singly even self-dual
$[130,65,22]$ code or not  (see~\cite[Table~I]{HKWY}).

%
%

\section{Construction of singly even self-dual codes with minimal shadow}
\label{sec:const}

The following method for constructing singly even self-dual
codes was given in~\cite{Tsai92}.
Let $C$ be a doubly even self-dual code of length $8 t$.
Let $x$ be a vector of odd weight.
Let $C^0$ denote the subcode of $C$
consisting of all codewords which are orthogonal to $x$.
Then there are cosets
$C^1,C^2,C^3$ of $C^0$ such that ${C^0}^\perp = C^0 \cup C^1 \cup
C^2 \cup C^3$, where $C = C^0  \cup C^2$ and $x+C = C^1 \cup C^3$.
Then 
\begin{equation}\label{eq:code}
C(x)= (0,0,C^0) \cup  (1,1,C^2) \cup  (1,0,C^1) \cup  (0,1,C^3) 
\end{equation}
is a singly even self-dual code of length $8t+2$.
Using this method, a singly even self-dual code with minimal
shadow was constructed in~\cite{Tsai92} for the parameters
$[42,21,8]$ and $[58,29,10]$.
This may be generalized as follows.

\begin{thm}\label{thm:const}
Let $C$ be an extremal doubly even self-dual code of
length $8 t$ with covering radius $R$.
Then there is a vector $x$ of weight 
$2\lfloor{\frac {R+1}{2}}\rfloor-1$
such that
$C(x)$  in~\eqref{eq:code} is a singly even self-dual 
$[8t+2,4t+1,
\min\{
4\lfloor{\frac {t}{3}} \rfloor + 4,
2\lfloor{\frac {R+1}{2}}\rfloor\}
]$ code
with minimal shadow.
\end{thm}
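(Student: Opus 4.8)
The plan is to build $x$ from a deepest coset of $C$, deduce self-duality from the construction of \cite{Tsai92}, read the minimum weight off the four pieces of \eqref{eq:code}, and exhibit a weight-$1$ vector in the shadow. Write $w=2\lfloor(R+1)/2\rfloor-1$, the largest odd integer not exceeding $R$. First I would take a deep hole $v$, chosen as a minimum-weight coset representative, so that $\wt(v)=d(v,C)=R$. If $R$ is odd I set $x=v$, so $\wt(x)=d(x,C)=R=w$; if $R$ is even I flip a nonzero coordinate of $v$, setting $x=v+e_i$ with $v_i=1$, so that $\wt(x)=R-1=w$, and since the coset $x+C$ consists of odd-weight vectors (as $C$ is even) while $R-1\le d(x,C)\le\wt(x)=R-1$, also $d(x,C)=R-1=w$. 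In both cases $x$ has odd weight $w$, lies outside $C$, and is a coset leader with $d(x,C)=w$; by \cite{Tsai92} the code $C(x)$ of \eqref{eq:code} is then singly even self-dual of length $8t+2$, hence of dimension $4t+1$.

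For the minimum weight I would bound the four families separately. Nonzero words of $(0,0,C^0)$ and of $(1,1,C^2)$ have weight at least $d(C)$ and at least $2+d(C)$, since both subsets lie in $C\setminus\{\mathbf{0}\}$ and $C$ is extremal with $d(C)=4\lfloor t/3\rfloor+4$; words of $(1,0,C^1)\cup(0,1,C^3)$ have weight $1+\wt(c)$ with $c$ ranging over $x+C$, whose minimum is $d(x,C)=w$. Hence $d(C(x))\ge\min\{d(C),1+w\}$, while the coset leader $x$ itself yields a word of weight $1+w$, so $d(C(x))\le 1+w$. The minimal-shadow claim I would settle directly: the doubly even subcode is $C(x)_0=(0,0,C^0)\cup F$, where $F$ is whichever of the two odd-prefix families consists of words of weight $\equiv0\pmod4$ (exactly one does, the two families lying in different residues mod $4$ according to whether the $C$-part sits in $x+C^0$ or $x+C^2$). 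The prefix unit vector whose support avoids the nonzero prefix coordinate of $F$ is orthogonal to all of $C(x)_0$ and, having empty $C$-part, is not in $C(x)$ because $x\notin C$; it therefore lies in the shadow, and since $8t+2\equiv2\pmod8$ this weight-$1$ vector certifies minimal shadow.

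The hard part will be pinning the minimum weight to the asserted value rather than merely bracketing it between $\min\{d(C),1+w\}$ and $1+w$: the answer is $\min\{d(C),1+w\}$ only once one knows on which side the minimum falls, and the delicate input is that $1+w\le d(C)$ always holds, so that the odd families carry the minimum. I would obtain this from the external-distance (Delsarte) bound on the covering radius: for the extremal self-dual $C$ the distinct nonzero weights of $C^\perp=C$ lie in $\{d(C),d(C)+4,\ldots,8t-d(C)\}\cup\{8t\}$, so their number $s$ satisfies $R\le s\le (8t-2d(C))/4+2$, and substituting $d(C)=4\lfloor t/3\rfloor+4$ gives $R\le d(C)$ after a short count; since $1+w\le R+1\le d(C)+1$ with $1+w$ even and $d(C)+1$ odd, this forces $1+w\le d(C)$, whence $d(C(x))=1+w=\min\{4\lfloor t/3\rfloor+4,\,2\lfloor(R+1)/2\rfloor\}$. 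Everything else is then routine, the only genuine care being the parity bookkeeping of the first paragraph that produces an odd-weight coset leader at depth exactly $w$.
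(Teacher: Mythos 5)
Your proof is correct and follows the paper's skeleton: choose an odd-weight vector $x$ whose coset has minimum weight $2\lfloor(R+1)/2\rfloor-1$, quote \cite{Tsai92} for self-duality of $C(x)$, analyze the four pieces of \eqref{eq:code} for the minimum weight, and certify minimal shadow by a weight-one prefix vector orthogonal to the doubly even subcode (your mod $4$ splitting of the two odd-prefix families is exactly the paper's computation using $x\cdot c=0$ for $c\in C^0$ and $x\cdot c'=1$ for $c'\in C^2$). You differ in two places. First, the paper obtains the coset of odd minimum weight by citing Fact~4 of \cite{A-P}, whereas you build it directly by flipping one coordinate of a deep hole; that is a harmless, self-contained substitute for the citation. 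Second, and more substantively, the paper concludes by taking the minimum of the minimum weights of $C^0\cup C^2$ and $C^1\cup C^3$; strictly speaking, the even-prefix half of $C(x)$ has minimum weight $\min\{d(C^0),\,d(C^2)+2\}$, which can exceed $d(C)$ when no minimum-weight codeword of $C$ is orthogonal to $x$, so the paper's two displayed minima only yield the bracketing $\min\{d(C),\,2\lfloor(R+1)/2\rfloor\}\le d(C(x))\le 2\lfloor(R+1)/2\rfloor$ rather than the exact value. Your external-distance step ($R\le 2t-2\lfloor t/3\rfloor\le 4\lfloor t/3\rfloor+4=d(C)$, hence $2\lfloor(R+1)/2\rfloor\le d(C)$ by parity) closes precisely this gap, and it shows in addition that the minimum in the statement of the theorem is always attained by $2\lfloor(R+1)/2\rfloor$ --- a point the paper leaves implicit. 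So your argument is not a different route but a tightened version of the paper's: the extra Delsarte lemma buys a fully rigorous determination of the minimum weight, at the cost of a slightly longer proof.
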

\begin{proof}
Since there is a coset of minimum weight $R$,
there is a coset of minimum weight 
$2\lfloor{\frac {R+1}{2}}\rfloor-1$ (see~\cite[Fact 4]{A-P}).
We denote the coset by $x+C$, where
$x$ has weight $2\lfloor{\frac {R+1}{2}}\rfloor-1$.
Then the code $C(x)$ in~\eqref{eq:code}
is a self-dual code of length $8t+2$~\cite{Tsai92}.
The minimum weight of $C^0 \cup C^2$
is $4\lfloor{\frac {t}{3}} \rfloor + 4$.
The minimum weight of $C^1 \cup C^3$ is
$2\lfloor{\frac {R+1}{2}}\rfloor -1$.
Hence, $C(x)$ has minimum weight 
$\min\{
4\lfloor{\frac {t}{3}} \rfloor + 4,
2\lfloor{\frac {R+1}{2}}\rfloor\}$.

It remains to show that $C(x)$ has shadow of minimum weight $1$.
Without loss of generality, we may assume that $x \in C^1$.
Let $v$ be a vector of $C^1$.
Then $v$ is written as $x+c$, where $c \in C^0$.
Since $c \cdot x=0$, we obtain
\begin{equation}\label{eq:C1} 
\wt(x+c) \equiv \wt(x) \pmod 4. 
\end{equation}
Let $w$ be a vector of $C^3$.
Then $w$ is written as $x+c+c'$, where $c \in C^0$ and $c' \in C^2$.
From~\eqref{eq:C1} and $(x+c) \cdot c'=1$, we obtain
\[
\wt(x+c+c') \equiv \wt(x)+2 \pmod 4. 
\]
Suppose that
$\wt(x) \equiv 1 \pmod 4$ (resp.\ $\wt(x) \equiv 3 \pmod 4$).
Then $(0,0,C^0) \cup  (0,1,C^3)$ (resp.\ $(0,0,C^0) \cup  (1,0,C^1)$) 
is the doubly even subcode of $C(x)$.
In addition, the vector 
$(1,0,\ldots,0)$ (resp.\ $(0,1,0,\ldots,0)$)
is orthogonal to
any vector of the doubly even subcode.
This shows that the shadow has minimum weight $1$.
\end{proof}

%
%
%

We concentrate on 
singly even self-dual $[24k+10,12k+5,4k+2]$
codes with minimal shadow.
There is no extremal singly even
self-dual code of length $24k+10$
with minimal shadow for any nonnegative
integer $k$~\cite{BW}.
Hence, we have the following proposition.

\begin{prop}
If there is an extremal doubly even self-dual code of
length $24k+8$ with covering radius $R \ge 4k+1$,
then there is a 
singly even self-dual $[24k+10,12k+5,4k+2]$
codes with minimal shadow.
\end{prop}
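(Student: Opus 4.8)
The plan is to reuse the construction $C(x)$ of Theorem~\ref{thm:const}, but to feed it a coset of minimum weight \emph{exactly} $4k+1$ rather than the coset supplied by the covering radius. Setting $t=3k+1$, the extremal doubly even self-dual code $C$ of length $24k+8=8t$ has minimum weight $4\lfloor t/3\rfloor+4=4k+4$, and the code $C(x)$ of \eqref{eq:code} has length $8t+2=24k+10$ and dimension $4t+1=12k+5$, which are the required parameters. The key observation is that if $x$ is a leader of a coset of minimum weight exactly $4k+1$, then the ``odd'' part $(1,0,C^1)\cup(0,1,C^3)$ of \eqref{eq:code} contributes minimum weight $(4k+1)+1=4k+2$, while the ``even'' part $(0,0,C^0)\cup(1,1,C^2)$ contributes $4k+4$; hence $C(x)$ has minimum weight $\min\{4k+4,4k+2\}=4k+2$. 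One should resist simply invoking Theorem~\ref{thm:const} with its covering-radius vector, since for large $R$ that choice can force minimum weight $4k+4$ instead of the target $4k+2$.

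The main step, and the only genuine obstacle, is to show that $R\ge 4k+1$ guarantees a coset of minimum weight exactly $4k+1$. I would argue by a discrete intermediate value principle on coset leader weights. Choose a coset attaining the covering radius, with leader $v$ of weight $R$, and write $v=e_{i_1}+\cdots+e_{i_R}$ as a sum of distinct unit vectors. Form the partial sums $v_0=0,v_1,\ldots,v_R=v$ and let $w_j$ be the minimum weight of the coset $v_j+C$, so that $w_0=0$ and $w_R=R$. Since $C$ has minimum weight $4k+4>1$, adding a single unit vector changes the minimum weight of a coset by at most $1$: if $v_{j+1}+C$ had a vector of weight below $w_j-1$, adding $e_{i_{j+1}}$ would produce a vector of $v_j+C$ of weight below $w_j$, a contradiction, and the reverse inequality is immediate. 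Hence $|w_{j+1}-w_j|\le 1$, so the sequence $w_0,\ldots,w_R$ attains every integer value between $0$ and $R$; in particular it attains $4k+1\le R$. Let $x+C$ be such a coset, with leader $x$ of weight $4k+1$.

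Finally I would run the construction of Theorem~\ref{thm:const} verbatim on this $x$. Since $x$ has odd weight, $C(x)$ in \eqref{eq:code} is a singly even self-dual code of length $24k+10$ and dimension $12k+5$, and by the computation above it has minimum weight $4k+2$. Because $\wt(x)=4k+1\equiv 1\pmod 4$, the shadow argument in the proof of Theorem~\ref{thm:const} (the case $\wt(x)\equiv 1\pmod 4$, using \eqref{eq:C1}) applies unchanged: the subcode $(0,0,C^0)\cup(0,1,C^3)$ is the doubly even part and the vector $(1,0,\ldots,0)$ lies in the shadow, so the shadow has minimum weight $1$. As $24k+10\equiv 2\pmod 8$, minimum shadow weight $1$ is exactly the condition of minimal shadow, which completes the proof.
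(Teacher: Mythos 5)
Your proof is correct, but it takes a genuinely different route from the paper. The paper deduces the proposition in one stroke: it applies Theorem~\ref{thm:const} with $t=3k+1$, which yields a singly even self-dual code of length $24k+10$ with minimal shadow and minimum weight $\min\{4k+4,\,2\lfloor(R+1)/2\rfloor\}\ge 4k+2$ (using $R\ge 4k+1$), and then rules out the value $4k+4$ by citing the Bouyuklieva--Willems result~\cite{BW} that no extremal singly even self-dual code of length $24k+10$ with minimal shadow exists --- this is exactly why the proposition is stated immediately after that citation with the word ``Hence.'' You instead sidestep \cite{BW} entirely: you feed the construction a coset of minimum weight \emph{exactly} $4k+1$, so that the odd part of $C(x)$ has minimum weight exactly $4k+2$ by design, and you justify the existence of such a coset with a discrete intermediate-value argument on coset minimum weights (adjacent cosets $v_j+C$ and $v_{j+1}+C$ differ by a unit vector, so their minimum weights differ by at most $1$). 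That lemma is, in substance, a self-contained proof of the coset-weight fact the paper imports from~\cite[Fact 4]{A-P}, applied at the value $4k+1$ rather than at the largest odd integer below $R$. The trade-off: the paper's argument is a two-line corollary of its own theorem plus a nontrivial nonexistence theorem from the literature, while yours is elementary and self-contained, needing neither \cite{BW} nor \cite{A-P}, and it makes transparent why the covering-radius coset itself would be the wrong input when $R\ge 4k+3$. One cosmetic remark: your parenthetical that the even part ``contributes $4k+4$'' is only a lower bound (all minimum-weight words of $C$ could lie in $C^2$, pushing the even part's minimum to $4k+6$), but since the odd part attains $4k+2<4k+4$ this looseness is harmless --- and the paper's own Theorem~\ref{thm:const} is phrased with the same imprecision.
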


The bordered double circulant extremal doubly even self-dual
$[80,40,16]$ code $B_{80,4}$ in~\cite{GH-DCC88} has
generator matrix
\[
\left(\begin{array}{ccccccccc}
{} & {} & {}      & {} & {} & 0     & 1  & \cdots &1  \\
{} & {} & {}      & {} & {} & 1     & {} & {}     &{} \\
{} & {} & I_{40}      & {} & {} &\vdots & {} & R     &{} \\
{} & {} & {}      & {} & {} & 1     & {} &{}      &{} \\
\end{array}\right),
\]
where 
$I_{40}$ is the identity matrix of order $40$ and 
$R$ is the $39 \times 39$ circulant matrix with
first row
\[
(111100000100101111101011101001101100011).
\]
It was shown in~\cite{HM-s} that $B_{80,4}$ has
covering radius $13$, where
a coset of minimum weight $13$ is given by
$x_{80} + B_{80,4}$ and $x_{80}$ has
the following support:
\[
\{ 2, 5, 8, 11, 14, 17, 20, 23, 26, 29, 32, 35, 38\}.
\]
We denote the code $B_{80,4}(x_{80})$ by $C_{82}$.

\begin{prop}\label{prop:82}
The code $C_{82}$ is a singly even self-dual $[82,41,14]$ code
with minimal shadow.
\end{prop}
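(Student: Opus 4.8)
The plan is to obtain Proposition~\ref{prop:82} as a direct instance of Theorem~\ref{thm:const}, so the bulk of the work is to verify that the hypotheses are met and then to substitute the relevant parameters. First I would record that $B_{80,4}$ is an extremal doubly even self-dual $[80,40,16]$ code, so that length $8t=80$ forces $t=10$, and that by~\cite{HM-s} its covering radius is $R=13$. These two facts are exactly the input required by Theorem~\ref{thm:const}.

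Next I would check that the given vector $x_{80}$ is a legitimate choice for the vector $x$ appearing in that theorem. Since $\wt(x_{80})=13=2\lfloor (R+1)/2\rfloor-1$ and, by~\cite{HM-s}, the coset $x_{80}+B_{80,4}$ has minimum weight $13$, the vector $x_{80}$ plays precisely the role of the coset representative of minimum weight $2\lfloor (R+1)/2\rfloor-1$ used in the proof of Theorem~\ref{thm:const}. In particular the argument in that proof applies verbatim to $x_{80}$: it does not depend on the specific coset chosen, only on its minimum weight $13$ and on the residue $\wt(x_{80})\equiv 1\pmod 4$.

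With these checks in place I would simply substitute $t=10$ and $R=13$ into the conclusion of Theorem~\ref{thm:const}. The length is $8t+2=82$, the dimension is $4t+1=41$, and the minimum weight is
\[
\min\Bigl\{4\lfloor t/3\rfloor+4,\ 2\lfloor (R+1)/2\rfloor\Bigr\}
=\min\{16,14\}=14 .
\]
The theorem also yields that $C_{82}=B_{80,4}(x_{80})$ has minimal shadow, which for $n=82\equiv 2\pmod 8$ means shadow of minimum weight $1$; this follows because $\wt(x_{80})\equiv 1\pmod 4$, so the doubly even subcode is $(0,0,C^0)\cup(0,1,C^3)$ and $(1,0,\dots,0)$ lies in the shadow.

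I do not expect a genuine obstacle here: the substantive content---extremality of $B_{80,4}$, its covering radius, and the minimum weight of the distinguished coset---has already been established in~\cite{GH-DCC88,HM-s}, and the construction-theoretic work is carried out once and for all in Theorem~\ref{thm:const}. The only point to watch is the arithmetic, namely that $4\lfloor 10/3\rfloor+4=16$ exceeds $2\lfloor 14/2\rfloor=14$, so that the minimum weight is governed by the shadow-side quantity $14$ rather than by the underlying doubly even code. As a sanity check independent of Theorem~\ref{thm:const}, I would also be prepared to confirm the minimum weight $14$ and the shadow minimum weight $1$ directly by a short {\sc Magma} computation on the explicit generator matrix.
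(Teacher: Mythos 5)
Your proposal is correct and follows exactly the route the paper intends: the paper states Proposition~\ref{prop:82} with no separate proof precisely because it is the instance $t=10$, $R=13$ of Theorem~\ref{thm:const} applied to $B_{80,4}$ with the coset representative $x_{80}$ of weight $13=2\lfloor(R+1)/2\rfloor-1$ from~\cite{HM-s}. Your verification of the hypotheses, the computation $\min\{4\lfloor 10/3\rfloor+4,\,2\lfloor 14/2\rfloor\}=\min\{16,14\}=14$, and the observation that $\wt(x_{80})\equiv 1\pmod 4$ yields a weight-one shadow vector are exactly the implicit content of the paper's argument.
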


For $k \ge 4$, only the extended quadratic residue code
$QR_{104}$ of length $104$
is the known extremal doubly even self-dual code of length $24k+8$.
It is not known whether $QR_{104}$ has covering radius
$R \ge 17$.
Our computer search failed to find a coset of weight $\ge 17$
in $QR_{104}$.

\section{New singly even self-dual $[82,41,14]$ codes}
\label{sec:82n}

In this section, 
we continue a search to find singly even self-dual
$[82,41,14]$ codes with weight enumerator for which no 
singly even self-dual code was previously known to exist.

Two self-dual codes $C$ and $C'$ of length $n$
are said to be {\em neighbors} if $\dim(C \cap C')=n/2-1$.
Any self-dual code of length $n$ can be reached
from any other by taking successive neighbors (see~\cite{C-S}).
By considering self-dual neighbors of $C_{82}$, 
we found $50$ singly even
self-dual $[82,41,14]$ codes $N_{82,i}$ $(i=1,2,\ldots,50)$ 
with weight enumerator for which no singly even
self-dual code was previously known to exist.
These codes are constructed as
\[
\langle (C_{82} \cap \langle x \rangle^\perp), x \rangle,
\]
where the supports $\supp(x)$ of $x$ are listed in Table~\ref{Tab:nei}.
The weight enumerators $W$ and the values $(\alpha, \beta)$
are also listed in the table.

\begin{table}[p]
\caption{Singly even self-dual $[82,41,14]$ neighbors $N_{82,i}$}
\label{Tab:nei}
\begin{center}
{\footnotesize
\begin{tabular}{c|l|c|c}
\noalign{\hrule height0.8pt}
Code & \multicolumn{1}{c|}{$\supp(x)$} & $W$ & $(\alpha,\beta)$\\
\hline
$N_{82, 1}$&$\{2,7,10,14,47,51,54,56,58,59,62,64,72,79\}$ &$W^C_{82,2}$&$(18,-750)$\\
$N_{82, 2}$&$\{2,7,12,13,14,42,47,56,57,59,61,71,73,79\}$ &$W^C_{82,3}$&$(1,-650)$ \\
$N_{82, 3}$&$\{6,9,11,42,44,47,51,56,59,61,75,77,78,79\}$ &$W^C_{82,3}$&$(1,-668)$ \\
$N_{82, 4}$&$\{5,9,45,49,55,59,61,63,66,70,71,72,75,81\}$ &$W^C_{82,3}$&$(1,-680)$ \\
$N_{82, 5}$&$\{2,3,9,13,14,39,40,47,49,56,57,64,77,82\}$ &$W^C_{82,3}$&$(1,-682)$ \\
$N_{82, 6}$&$\{2,3,9,11,12,45,46,49,53,64,72,75,77,80\}$&$W^C_{82,3}$&$(1,-686)$ \\
$N_{82, 7}$&$\{5,43,46,49,50,51,63,65,66,71,72,73,77,81\}$&$W^C_{82,3}$&$(1,-688)$ \\
$N_{82, 8}$&$\{3,4,5,7,9,45,48,55,56,58,61,66,73,77\}$&$W^C_{82,3}$&$(1,-692)$ \\
$N_{82, 9}$&$\{3,7,40,46,49,52,54,57,58,59,72,74,75,79\}$&$W^C_{82,3}$&$(1,-694)$ \\
$N_{82,10}$&$\{3,11,14,44,45,46,49,51,59,71,72,76,77,81\}$&$W^C_{82,3}$&$(1,-696)$ \\
$N_{82,11}$&$\{6,7,10,12,46,51,53,55,58,70,71,73,78,82\}$&$W^C_{82,3}$&$(1,-698)$ \\
$N_{82,12}$&$\{5,8,47,51,52,57,61,66,67,71,72,74,79,80\}$&$W^C_{82,3}$&$(1,-700)$ \\
$N_{82,13}$&$\{2,3,7,8,9,11,40,44,49,52,55,63,77,82\}$&$W^C_{82,3}$&$(1,-702)$ \\
$N_{82,14}$&$\{11,12,45,46,49,50,52,55,60,62,66,70,71,81\}$&$W^C_{82,3}$&$(1,-704)$ \\
$N_{82,15}$&$\{3,44,45,46,58,60,62,64,65,67,68,73,74,77\}$&$W^C_{82,3}$&$(1,-712)$ \\
$N_{82,16}$&$\{2,4,10,43,45,46,49,54,64,66,76,78,80,81\}$&$W^C_{82,3}$&$(1,-722)$ \\
$N_{82,17}$&$\{2,4,9,10,45,56,57,59,63,64,67,68,70,76\}$&$W^C_{82,3}$&$(1,-738)$ \\
$N_{82,18}$&$\{3,6,9,10,40,47,53,54,55,68,73,76,80,81\}$&$W^C_{82,3}$&$(1,-748)$ \\
$N_{82,19}$&$\{2,11,13,37,47,51,52,55,70,77,78,79,80,82\}$&$W^C_{82,3}$&$(2,-672)$ \\
$N_{82,20}$&$\{3,9,11,47,49,59,60,62,67,68,74,76,81,82\}$&$W^C_{82,3}$&$(2,-720)$ \\
$N_{82,21}$&$\{4,8,9,40,48,49,52,54,55,66,67,68,73,81\}$&$W^C_{82,3}$&$(2,-732)$ \\
$N_{82,22}$&$\{5,6,8,11,44,45,53,56,57,61,62,64,65,66\}$&$W^C_{82,3}$&$(2,-734)$ \\
$N_{82,23}$&$\{4,7,8,9,46,57,58,61,63,68,71,73,78,81\}$&$W^C_{82,3}$&$(0,-640)$ \\
$N_{82,24}$&$\{2,3,5,10,40,44,57,58,60,63,65,71,76,79\}$&$W^C_{82,3}$&$(0,-650)$ \\
$N_{82,25}$&$\{2,5,6,8,50,51,58,63,64,66,67,71,73,81\}$&$W^C_{82,3}$&$(0,-660)$ \\
$N_{82,26}$&$\{2,3,9,46,54,56,59,60,61,62,67,76,78,82\}$&$W^C_{82,3}$&$(0,-662)$ \\
$N_{82,27}$&$\{4,5,38,40,48,53,56,57,62,64,66,69,71,76\}$&$W^C_{82,3}$&$(0,-664)$ \\
$N_{82,28}$&$\{3,7,8,10,39,50,51,62,66,67,70,73,77,82\}$&$W^C_{82,3}$&$(0,-668)$ \\
$N_{82,29}$&$\{2,43,45,46,50,51,52,53,61,69,72,74,77,81\}$&$W^C_{82,3}$&$(0,-672)$ \\
$N_{82,30}$&$\{6,7,9,40,58,61,63,70,73,77,79,80,81,82\}$&$W^C_{82,3}$&$(0,-676)$ \\
$N_{82,31}$&$\{3,4,5,7,43,45,48,50,54,59,64,70,71,81\}$&$W^C_{82,3}$&$(0,-678)$ \\
$N_{82,32}$&$\{6,11,50,53,54,56,59,61,64,68,69,72,74,76\}$&$W^C_{82,3}$&$(0,-680)$ \\
$N_{82,33}$&$\{8,11,12,35,49,50,53,56,57,58,62,72,77,82\}$&$W^C_{82,3}$&$(0,-684)$ \\
$N_{82,34}$&$\{5,11,46,56,57,58,60,62,63,64,65,70,71,79\}$&$W^C_{82,3}$&$(0,-686)$ \\
$N_{82,35}$&$\{10,11,13,14,52,54,60,64,70,71,72,76,77,80\}$&$W^C_{82,3}$&$(0,-688)$ \\
$N_{82,36}$&$\{5,9,45,49,56,57,61,62,63,64,67,70,75,81\}$&$W^C_{82,3}$&$(0,-690)$ \\
$N_{82,37}$&$\{2,6,8,9,44,45,48,56,66,68,75,77,80,81\}$&$W^C_{82,3}$&$(0,-692)$ \\
$N_{82,38}$&$\{4,8,10,42,44,54,58,60,63,65,68,77,79,80\}$&$W^C_{82,3}$&$(0,-694)$ \\
$N_{82,39}$&$\{3,9,43,44,49,50,51,52,55,61,65,71,75,81\}$&$W^C_{82,3}$&$(0,-696)$ \\
$N_{82,40}$&$\{6,7,13,42,44,49,50,52,54,55,57,63,72,74\}$&$W^C_{82,3}$&$(0,-698)$ \\
\noalign{\hrule height0.8pt}
\end{tabular}
}
\end{center}
\end{table}

\setcounter{table}{0}
\begin{table}[thb]
\caption{Singly even self-dual $[82,41,14]$ neighbors $N_{82,i}$ (continued)}
\begin{center}
{\footnotesize
\begin{tabular}{c|l|c|c}
\noalign{\hrule height0.8pt}
Code & \multicolumn{1}{c|}{$\supp(x)$} & $W$ & $(\alpha,\beta)$\\
\hline
$N_{82,41}$&$\{2,4,8,13,45,46,49,51,58,65,66,73,74,80\}$&$W^C_{82,3}$&$(0,-700)$ \\
$N_{82,42}$&$\{3,9,12,45,54,55,59,64,66,72,74,75,78,80\}$&$W^C_{82,3}$&$(0,-706)$ \\
$N_{82,43}$&$\{2,4,9,10,45,55,56,57,60,64,67,69,72,74\}$&$W^C_{82,3}$&$(0,-708)$ \\
$N_{82,44}$&$\{4,9,11,40,45,46,55,57,63,64,65,71,72,74\}$&$W^C_{82,3}$&$(0,-710)$ \\
$N_{82,45}$&$\{3,44,45,46,57,60,61,62,63,70,71,74,75,77\}$&$W^C_{82,3}$&$(0,-712)$ \\
$N_{82,46}$&$\{7,40,44,45,52,53,55,56,67,68,71,76,79,81\}$&$W^C_{82,3}$&$(0,-716)$ \\
$N_{82,47}$&$\{3,5,9,12,42,45,47,51,53,55,60,64,68,75\}$&$W^C_{82,3}$&$(0,-718)$ \\
$N_{82,48}$&$\{6,39,44,45,54,60,62,64,65,75,77,78,79,81\}$&$W^C_{82,3}$&$(0,-720)$ \\
$N_{82,49}$&$\{2,5,9,43,60,61,62,64,68,71,74,76,80,81\}$&$W^C_{82,3}$&$(0,-724)$ \\
$N_{82,50}$&$\{3,7,9,13,43,46,48,49,50,52,58,60,63,81\}$&$W^C_{82,3}$&$(0,-728)$ \\
\noalign{\hrule height0.8pt}
\end{tabular}
}
\end{center}
\end{table}

Combined with the known result in~\cite{DGH},
the results in the previous section and this section
show the following:

\begin{prop}
There is a singly even self-dual $[82,41,14]$ code with shadow of
minimum weight $s$ for $s \in \{1,5, 9,13\}$.
\end{prop}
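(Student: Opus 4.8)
The plan is to assemble, for each $s \in \{1,5,9,13\}$, one explicit singly even self-dual $[82,41,14]$ code whose shadow has minimum weight exactly $s$, drawing entirely on codes already constructed. The crucial observation is that, once a code is known to have one of the weight enumerators classified in Section~\ref{sec:WE}, the minimum weight of its shadow can be read off directly from the matching shadow enumerators $W^S_{82,1}$, $W^S_{82,2}$, $W^S_{82,3}$. So the whole argument reduces to pairing each target value $s$ with a suitable already-constructed example.

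First, for $s=1$ I would invoke Proposition~\ref{prop:82}: the code $C_{82}$ has minimal shadow, and since $82 \equiv 2 \pmod 8$ the minimal-shadow condition is exactly $d(S)=1$. For $s=5$ I would take the neighbor $N_{82,1}$ of Table~\ref{Tab:nei}, whose weight enumerator is $W^C_{82,2}$; the corresponding shadow enumerator $W^S_{82,2}$ has leading term $y^5$, so its minimum weight is $5$.

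The remaining two values both live inside the single family $W^C_{82,3}$, and the point is to separate them through the parameter $\alpha$. The shadow enumerator $W^S_{82,3}$ begins $\alpha y^9 + (-16\alpha-\beta)y^{13}+\cdots$. Hence a code with weight enumerator $W^C_{82,3}$ has shadow of minimum weight $9$ precisely when $\alpha\ge 1$, and minimum weight $13$ precisely when $\alpha=0$ (in which case the $y^{13}$ coefficient is $-\beta$, which is positive for the relevant examples). For $s=9$ I would therefore take any neighbor in Table~\ref{Tab:nei} with $W^C_{82,3}$ and $\alpha\ge 1$, for example $N_{82,2}$ with $(\alpha,\beta)=(1,-650)$. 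For $s=13$ I would take either the code of~\cite{DGH} with parameters~\eqref{eq:82}, or any of the neighbors with $W^C_{82,3}$ and $\alpha=0$, such as $N_{82,23}$ with $(\alpha,\beta)=(0,-640)$.

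There is no real analytic obstacle in this proposition; it is a bookkeeping verification that the advertised weight enumerators force the claimed shadow minimum weights. The only step demanding a little care is the case distinction within $W^C_{82,3}$: one must confirm that setting $\alpha=0$ genuinely kills the weight-$9$ term of the shadow while leaving a strictly positive weight-$13$ coefficient $-\beta$, so that the examples with $\alpha=0$ attain $d(S)=13$ exactly, rather than some larger value coming from a further accidental vanishing. Since each cited example carries explicit values of $(\alpha,\beta)$, this check is immediate in every case.
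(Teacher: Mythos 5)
Your proposal is correct and follows essentially the same route as the paper: the paper's (implicit) proof likewise cites $C_{82}$ (Proposition~\ref{prop:82}) for $s=1$, the neighbor with $W^C_{82,2}$ for $s=5$, the neighbors with $W^C_{82,3}$ and $\alpha\ge 1$ for $s=9$, and the code of~\cite{DGH} (or the $\alpha=0$ neighbors) for $s=13$, reading off $d(S)$ from the shadow enumerators exactly as you do.
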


It remains to determine whether there is 
a singly even self-dual $[82,41,14]$ code with shadow of minimum weight $17$.

At the end of this section,
we summarize the current information on 
the weight enumerators which actually occur for the 
possible weight enumerators.
A singly even self-dual $[82,41,14]$ code with weight enumerator
$W^C_{82,1}$ is known (see Proposition~\ref{prop:82}).
A singly even self-dual $[82,41,14]$ code with weight enumerator
$W^C_{82,2}$ is known for $(\alpha,\beta)=(18,-750)$
(see Table~\ref{Tab:nei}).
A singly even self-dual $[82,41,14]$ code with weight enumerator
$W^C_{82,3}$ is known for 
\begin{align*}
\alpha=0  \text{ and }
\beta =& -640,-650,-656,-660,-662,-664,-668,-672,-676,\\
&-678,-680,-684,-686,-688,-690,-692,-694,-696, \\
&-698,\\
\alpha=1 \text{ and }
\beta=& -650,-668,-680,-682,-686,-688,-692,-694,-696,\\
& -698,-700,-702,-704,-712,-722,-738,-748, \\
\alpha=2  \text{ and }
 \beta =& -672,-720,-732,-734 
\end{align*}
(see \eqref{eq:82} and Table~\ref{Tab:nei}).

\bigskip
\noindent
{\bf Acknowledgment.}
This work was supported by JSPS KAKENHI Grant Number 15H03633.
The author would like to thank the anonymous referee for
useful comments.



\end{document}